\newcommand{\ux}{\underline{x}}
\newtheorem{theorem}{Theorem}[section]
\newtheorem{definition}[theorem]{Definition}
\newtheorem{lemma}[theorem]{Lemma}
\newtheorem{proposition}[theorem]{Proposition}
\newtheorem{remark}[theorem]{Remark}
\newcommand{\rr}{{\mathbb{R}}}
\newcommand{\unx}{\underline{x}}
\newcommand{\unz}{\underline{z}}
\begin{document}

\title{Segal-Bargmann-Fock modules of monogenic functions\thanks{Published in Journal of Mathematical Physics 58 (2017), no. 10, https://doi.org/10.1063/1.5008651.}}

\author{Dixan Pe\~na Pe\~na\\
\small{Dipartimento di Matematica}\\
\small{Politecnico di Milano}\\
\small{Via E. Bonardi 9}\\
\small{20133 Milano, Italy}\\
\small{e-mail: dixanpena@gmail.com}\and
Irene Sabadini\\
\small{Dipartimento di Matematica}\\
\small{Politecnico di Milano}\\
\small{Via E. Bonardi 9}\\
\small{20133 Milano, Italy}\\
\small{e-mail: irene.sabadini@polimi.it}\and
Franciscus Sommen\\
\small{Clifford Research Group}\\
\small{Department of Mathematical Analysis}\\
\small{Faculty of Engineering and Architecture}\\
\small{Ghent University}\\
\small{Krijgslaan 281-S8, 9000 Gent, Belgium}\\
\small{e-mail: franciscus.sommen@ugent.be}}

\date{}

\maketitle

\begin{abstract}
\noindent In this paper we introduce the classical Segal-Bargmann transform starting from the basis of Hermite polynomials and extend it to Clifford algebra-valued functions. Then we apply the results to monogenic functions and prove that the Segal-Bargmann kernel corresponds to the kernel of the Fourier-Borel transform for monogenic functionals. This kernel is also the reproducing kernel for the monogenic Bargmann module.
\vspace{0.2cm}\\
\noindent\textit{Keywords}: Segal-Bargmann-Fock spaces; Segal-Bargmann transform; monogenic functions; Dirac operators.\vspace{0.1cm}\\
\textit{Mathematics Subject Classification}: 30G35, 42B10, 44A15.
\end{abstract}

\section{Introduction}

The Segal-Bargmann transform can be seen as a unitary map from spaces of square-integrable functions to spaces of  holomorphic functions square-integrable with respect to a Gaussian density (see \cite{Ba}, \cite{Se1}, \cite{Se2}). The latter class of functions is known in the literature under different names: Bargmann or Segal-Bargmann or Fock spaces either in one or several complex variables (see \cite{folland}, \cite{neretin}, \cite{zhu}). In this paper we do not enter historical discussion on the nomenclature and we will call them Segal-Bargmann-Fock spaces. These spaces are important in several different settings:
in quantum mechanics, where they it is used the description of the spaces via tensor products; in infinite dimensional analysis and in free analysis, since these spaces are related to the white noise space (a probability space) and to the theory of stochastic distributions, see \cite{dan}.

It is natural to consider another higher dimensional extension, namely the one based on monogenic functions with values in a  Clifford algebra. Recently, in \cite{kmnq}, \cite{mnq}, the authors have pointed out that this approach can be useful in studying quantum systems with internal, discrete degrees of freedom corresponding to nonzero spins.

Square-integrable holomorphic functions are well-known in the literature and they may belong to Hardy or Bergman spaces, whose reproducing kernels are rational functions. In the case of the Segal-Bargmann-Fock spaces the kernel is an exponential function. This reproducing kernel is not bounded and this fact makes some computations in this framework more complicated. On the other hand, the fact that Segal-Bargmann-Fock spaces are defined on the whole space (either $\mathbb C$ or $\mathbb C^m$) makes other techniques from Fourier analysis available. From the physics point of view, the (normalized) reproducing kernels of the Segal-Bargmann-Fock spaces are the so-called coherent states in quantum mechanics.

In this paper we consider the higher dimensional framework based on monogenic functions. We can introduce a notion of Segal-Bargmann module (over the Clifford algebra) and of Segal-Bargmann-Fock transform. The fact that we have a Fischer decomposition allows to prove a relation between the projection of the transform onto its monogenic part and the Fourier-Borel kernel. Equipping the Segal-Bargmann-Fock module with the Fischer inner product we can also show that the Segal-Bargmann transform is still an isometry on harmonic polynomials.

The plan of the paper is the following. In Section 2 we revise the classical Segal-Bargmann transform. Then in Section 3 we apply the results of Section 2 to the case of monogenic functions. In Section 4 we study monogenic Bargmann modules. Finally in Section 5 we make summarizing remarks.

\section{The classical Segal-Bargmann transform}

The Segal-Bargmann transform is a very well-known operator acting from the space $L^2(\rr)$ and the Segal-Bargmann-Fock space which is unitary and allows to identify these two spaces (see \cite{Ba}, \cite{Se1}, \cite{Se2} and the recent monographs \cite{neretin}, \cite{zhu}).\\
The Segal-Bargmann-Fock space $\mathscr B(\mathbb C)$ is the Hilbert space of entire functions which are  square-integrable with respect to the Gaussian density, i.e.
\[
\frac{1}{\pi}\int_{\mathbb C}\exp\left(-|z|^2\right) |f(z)|^2 \, dx dy<\infty.
\]
For any $f\in L^2(\rr)$, we define the Segal-Bargmann transform $\mathcal B:\ L^2(\mathbb R)\to \mathscr B(\mathbb C)$ (sometimes called Segal-Bergmann-Fock transform) as
\[
\mathcal B[f](z)= \frac{1}{\sqrt{2\pi}}\int_{\rr} \exp\left(-\frac{z^2}{2}+xz-\frac{x^2}{4}\right) f(x) \, dx,
\]
for any $f\in L^2(\mathbb R)$.\\
Let us consider in $L^2(\rr)$ the basis $\{\psi_k(x)\}$, where
\[
\psi_k(x)=H_k(x)e^{-{x^2}/{4}}, \qquad H_k(x)=(-1)^ke^{{x^2}/{2}}\partial_x^k e^{-{x^2}/{2}}.
\]
Note that $H_k(x)$ are  the well-known Hermite polynomials. This basis is orthogonal, in fact
\[
\langle\psi_\ell,\psi_k\rangle=\frac{1}{\sqrt{2\pi}}\int_{\rr} \overline{\psi_\ell(x)}\psi_k(x)\, dx=\delta_{\ell k}k!,
\]
where $\delta_{\ell k}$ is the Kronecker delta.

Using integration by parts we obtain
\[
\begin{split}
\mathcal B[\psi_k](z)&=\frac{1}{\sqrt{2\pi}}\int_{\rr} \exp\left(-\frac{(z-x)^2}{2}\right) H_k(x)\, dx\\
&=\frac{1}{\sqrt{2\pi}}\int_{\rr}\partial_x^k \left(\exp\left(-\frac{z^2}{2}+xz\right)\right) \exp(-x^2/2)\, dx\\
&=\frac{1}{\sqrt{2\pi}}z^k \int_{\rr}\exp\left(-\frac{(z-x)^2}{2}\right)\,dx\\
&= z^k.
\end{split}
\]
Let us consider the so-called Fischer space, namely the space of real analytic functions equipped with the inner product
\[
[ R,S]= \overline{R(\partial_x)}S(x)\vert_{x=0}.
\]
To consider functions defined over the real numbers is necessary to obtain a match between the Fischer and the Bargmann inner products.\\
If we consider instead of the complex variable $z$ a real variable $u$, we can use the Fischer inner product to get the equality $\left[ u^\ell, u^k \right] = \delta_{\ell k}k!$.

The above computations show that the Segal-Bargmann-Fock transform $f(x)\mapsto \mathcal B[f](u)$ is an isometry between $L^2(\rr)$ and the Fischer space.

On the other hand we can consider the space $\mathscr O(\mathbb C)$ of entire holomorphic functions equipped with the inner product
\[
\langle f, g\rangle=\frac{1}{\pi}\int_{\mathbb C}\exp\left(-|z|^2\right)\overline{f(z)}g(z) \, dx dy, \qquad z=x+iy.
\]
We note that the basis $\left\{z^k\right\}$ is orthogonal, in fact:
\[
\langle z^\ell, z^k\rangle = \frac{1}{\pi}\int_{\mathbb C}\exp\left(-|z|^2\right)\overline{z}^k z^\ell \, dx dy=0\quad{\rm for}\ k\not=\ell.
\]
Moreover
\[
\langle z^k, z^k\rangle = \frac{1}{\pi}\int_0^{2\pi}\int_{0}^\infty\exp\left(-r^2\right)r^{2k}\,rdrd\theta = k!,\quad{\rm where}\ z=re^{\theta}.
\]
Then we have that $\mathcal B: \ L^2(\rr)\to \mathscr B(\mathbb C)$ is an isometry. Indeed, the Fischer inner product for functions defined in $\mathbb R$ is equal to the Bargmann inner product of the holomorphic extensions.

This discussion can be extended from one to several complex variables.
\begin{definition}
The Segal-Bargmann-Fock space $\mathscr B(\mathbb C^m)$ is defined as the Hilbert space of entire functions $f$ in $\mathbb C^m$ which are square-integrable with respect to the $2m$-dimensional Gaussian density, i.e.
\[
\frac{1}{\pi^m}\int_{\mathbb C^m}\exp\left(-|\unz|^2\right) |f(\unz)|^2 \, d\unx d\underline{y}<\infty, \qquad \unz=\unx+i\underline{y}
\]
and equipped with the inner product
\[
\langle f,g\rangle =\frac{1}{\pi^m}\int_{\mathbb C^m} \exp\left(-|\unz|^2\right)\overline{f(\unz)}g(\unz)\, d\unx d\underline{y}.
\]
\end{definition}

Let us denote by $\unx$ the $m$-tuple $(x_1,\ldots, x_m)\in\rr^m$. We consider the space $L^2(\rr^m)$ with the basis $\{\psi_{k_1\ldots k_m}(\unx)\}$ where
\[
\psi_{k_1\ldots k_m}(\unx)= H_{k_1\ldots k_m}(\unx) e^{-|\unx|^2/4}
\]
and $H_{k_1\ldots k_m}$ are the Hermite polynomials in $\mathbb R^m$ given by
\[H_{k_1\ldots k_m}(\unx) e^{-\vert\unx\vert^2/2}=(-1)^{k_1+\dots+k_m}\partial_{x_1}^{k_1}\ldots \partial_{x_m}^{k_m} e^{-\vert\unx\vert^2/2}.\]

\begin{definition}\label{SBtr}
The Segal-Bargmann-Fock transform $\mathcal B : \  L^2(\rr^m) \to \mathcal B(\mathbb C^m)$ is defined by
\[
\mathcal B[f](\unz)=\frac{1}{({2\pi})^{m/2}}\int_{\mathbb R^m}\exp\left(-\frac{\unz\cdot\unz}{2}+\unx\cdot\unz-\frac{\unx\cdot\unx}{4}\right) f(\underline x) \, d\unx,\quad \unx\cdot\unz=\sum_{j=1}^mx_jz_j,
\]
for any  $f\in L^2(\rr^m)$.
\end{definition}
\begin{remark}{\rm
Similarly to the one variable case we have that
\[
\mathcal B[\psi_{k_1\ldots k_m}](\unz)=z_1^{k_1}\ldots z_m^{k_m}.
\]
}
\end{remark}
Finally note that $\|\psi_{k_1\ldots k_m}\|^2=k_1!\ldots k_m!$. It follows from this that $\mathcal B: \ L^2(\rr^m)\to \mathscr B(\mathbb C^m)$ is an isometry.

\section{The monogenic case}

Let us denote by $\rr_m$ the real Clifford algebra generated by $m$ imaginary units $e_1,\dots,e_m$. The multiplication in this associative algebra is determined by the relations
\[e_je_k+e_ke_j=-2\delta_{ij}.\]
An element $a\in\rr_m$ can be written as
\[a=\sum_Aa_Ae_A,\quad x_A\in\rr,\]
where the basis elements $e_A=e_{j_1}\dots e_{j_k}$ are defined for every subset $A=\{j_1,\dots,j_k\}$ of $\{1,\dots,m\}$ with $j_1<\dots<j_k$. For the empty set, one puts $e_{\emptyset}=1$, the latter being the identity element.

Observe that the dimension of $\mathbb R_{m}$ as a real linear space is $2^m$. Furthermore, conjugation in $\mathbb R_{m}$ is given by $\overline a=\sum_Aa_A\overline e_A$, where $\overline e_A=\overline e_{j_k}\dots\overline e_{j_1}$ with $\overline e_j=-e_j$, $j=1,\dots,m$.

In the Clifford algebra $\mathbb{R}_m$, we can identify the so-called 1-vectors, namely the linear combinations with real coefficients of the elements $e_j$, $j=1,\ldots,m$, with the vectors in the Euclidean space $\mathbb{R}^m$. The correspondence is given by the map $(x_1,\ldots,x_m)\mapsto \underline{x}=x_1e_1+\ldots+x_me_m$ and it is obviously one-to-one.

The norm of a 1-vector $\underline x$ is defined as $\vert\unx\vert=\sqrt{x_1^2+\ldots +x_m^2}$ and clearly $\underline x^2=-\vert\unx\vert^2$. The product of two 1-vectors $\underline x=\sum_{j=1}^mx_je_j$ and $\underline y=\sum_{j=1}^my_je_j$ splits into a scalar part and a 2-vector or so-called bivector part:
$$
\ux\, \underline{y}=-\langle\ux ,\underline{y}\rangle+\ux\wedge\underline{y},
$$
where
\[
\begin{split}
&\langle\ux ,\underline{y}\rangle=\unx\cdot\underline{y}=-\frac{1}{2}(\ux\,\underline{y}+\underline{y}\,\ux)=\sum_{j=1}^m x_jy_j\\
 &\ux\wedge\underline{y}=\frac{1}{2}(\ux\,\underline{y}-\underline{y}\,\ux)=\sum_{j=1}^m\sum_{k=j+1}^m(x_jy_k-x_ky_j)e_je_k.
 \end{split}
\]
 The complex Clifford algebra $\mathbb C_m$ can be seen as the complexification of the real Clifford algebra $\mathbb R_{m}$, i.e. $\mathbb C_m=\mathbb R_{m}\oplus i\,\mathbb R_{m}$. Any complex Clifford number $c\in\mathbb C_m$ may be written as $c=a+ib$, $a,b\in\mathbb R_{m}$, leading to the definition of the Hermitian conjugation: $c^{\dagger}=\overline a-i\overline b$.

The first order differential operator
\[\partial_{\underline x}=\sum_{j=1}^me_j\partial_{x_j}\]
is called the Dirac operator in $\mathbb R^m$. Functions in the kernel of this operator are known as monogenic functions (see e.g. \cite{bds}, \cite{csss}, \cite{DSS}, \cite{gm}).

\begin{definition}
A function $f:\Omega\subset\mathbb R^{m}\rightarrow\mathbb{C}_{m}$ defined and continuously differentiable in the open set $\Omega$ is said to be $($left$)$ monogenic if $\partial_{\underline x}f(\underline x)=0$ in $\Omega$. We denote by $\mathscr M(\rr^m)$ the right $\mathbb C_m$-module of monogenic functions in $\mathbb R^m$.
\end{definition}

A basic result in Clifford analysis is the so-called Fischer decomposition. Every homogeneous polynomial $R_k$ of degree $k$ can be uniquely decomposed as
\[R_k(\underline x)=M_k(\underline x)+\underline xR_{k-1}(\underline x),\]
where $M_k$, $R_{k-1}$ are homogeneous polynomials and $M_k\in\mathscr M(\rr^m)$. The monogenic polynomial $M_k$ is called the monogenic part of $R_k$ denoted by $\mathcal M(R_k)$.

If $f$ is real analytic function near the origin, then it admits a decomposition of the form $f(\underline x)=\sum_{k=0}^\infty R_k(\underline x)$ in an open ball centred at the origin. The monogenic part of $f$ is thus defined by
\[\mathcal M\big(f(\underline x)\big)=\sum_{k=0}^\infty \mathcal M\big(R_k(\underline x)\big).\]
Let us recall the definition of the Clifford-Hermite polynomials $H_{s,k}(\underline x)$. They are polynomials with real coefficients in $\underline x$ of degree $s$ satisfying
\[H_{s,k}(\unx)e^{-|\unx|^2/2}P_k(\unx)=(-1)^s\partial_{\unx}^s\big(P_k(\unx)e^{-|\unx|^2/2}\big),\]
where $P_k(\unx)$ denotes a homogeneous polynomial of degree $k$ in $\mathscr M(\rr^m)$.

Put $\psi_{s,k}(\unx)=H_{s,k}(\unx)e^{-|\unx|^2/4}$.  The set of functions $\big\{\psi_{s,k}(\unx)P_k(\unx):\,s,k\in\mathbb N\big\}$ is an orthogonal basis for $L^2(\rr^m)$ (see \cite{Som}). Recalling Definition \ref{SBtr}, we have the following:

\begin{theorem}\label{Th3.2}
Assume that $P_k(\unx)$ is a homogeneous polynomial of degree $k$ in $\mathscr M(\rr^m)$. Then the following formula holds:
\[\mathcal B\big[\psi_{s,k}(\underline x)P_k(\underline x)\big](\unz) = \unz^s \mathcal B\big[P_k(\unx)e^{-|\unx|^2/4}\big](\underline z).\]
\end{theorem}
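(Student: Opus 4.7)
The plan is to reduce the identity to a Dirac-operator integration-by-parts computation, carried out $s$ times. First I would combine the factor $e^{-|\unx|^2/4}$ inside $\psi_{s,k}$ with the corresponding factor in the Bargmann kernel to produce $e^{-|\unx|^2/2}$, and then invoke the Rodrigues-type relation
$H_{s,k}(\unx)e^{-|\unx|^2/2}P_k(\unx)=(-1)^s\partial_{\unx}^s\bigl(P_k(\unx)e^{-|\unx|^2/2}\bigr)$
to rewrite the integrand as
\[
(-1)^s\,E(\unx,\unz)\,\partial_{\unx}^s\bigl(P_k(\unx)e^{-|\unx|^2/2}\bigr),
\]
where $E(\unx,\unz):=\exp\!\bigl(-\tfrac{1}{2}\unz\cdot\unz+\unx\cdot\unz\bigr)$.

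Next I would integrate by parts $s$ times with respect to the Dirac operator. The key identity is
\[
\int_{\rr^m}E(\unx,\unz)\,\partial_{\unx}f(\unx)\,d\unx=-\int_{\rr^m}\bigl(\partial_{\unx}E(\unx,\unz)\bigr)\,f(\unx)\,d\unx
\]
for Clifford-valued Schwartz functions $f$. This follows from the scalar one-variable identity applied coordinate by coordinate: since $E$ is scalar in $\unx$, each $\partial_{x_j}E$ is scalar and commutes with $e_j$, so the pieces $e_j(\partial_{x_j}E)$ reassemble into $\partial_{\unx}E$ on the left of $f$. A direct computation gives $\partial_{\unx}E=\unz\,E$, and as $\unz$ does not depend on $\unx$ it factors out to the left of the integral. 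Iterating $s$ times produces $(-\unz)^s$, which combined with the prefactor $(-1)^s$ from the Rodrigues formula collapses to $\unz^s$. The leftover integral is precisely $\mathcal B\bigl[P_k(\unx)e^{-|\unx|^2/4}\bigr](\unz)$, giving the claim. Schwartz decay of $\partial_{\unx}^j\bigl(P_k(\unx)e^{-|\unx|^2/2}\bigr)$ ensures that boundary terms vanish, since the Gaussian dominates the at-most-linear growth of $e^{\unx\cdot\unz}$ in $\unx$ for fixed $\unz$.

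The point I expect to need care with is the well-known asymmetric behaviour of integration by parts under the Dirac operator, where the non-commuting units $e_j$ cannot in general be shuffled freely between the two factors. In the present setting this obstruction is benign because $E$, and therefore each $\partial_{x_j}E$, is scalar-valued in the integration variable; this scalarity is exactly what lets $\unz$ migrate cleanly to the left of the integral at every stage and accumulate into the monomial $\unz^s$ appearing on the right-hand side of the statement.
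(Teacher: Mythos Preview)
Your proposal is correct and follows essentially the same route as the paper: combine the Gaussians, invoke the Rodrigues relation for $H_{s,k}$, and integrate by parts $s$ times using $\partial_{\unx}\exp(-\tfrac12\unz\cdot\unz+\unx\cdot\unz)=\unz\,\exp(-\tfrac12\unz\cdot\unz+\unx\cdot\unz)$. Your added care about why the Dirac integration by parts is legitimate here (scalarity of the kernel in $\unx$, constancy of $\unz$ allowing it to be pulled to the left at each step, Gaussian decay killing boundary terms) is exactly the justification the paper leaves implicit.
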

\begin{proof} The formula follows using integration by parts in higher dimensions:
\[
\begin{split}
\mathcal B\big[\psi_{s,k}(\underline x)P_k(\underline x)\big](\unz)&=\frac{1}{(2\pi)^{m/2}}\int_{\rr^m} \exp\left(-\frac{\unz\cdot\unz}{2}+\unx\cdot\unz-\frac{\unx\cdot\unx}{4}\right) \psi_{s,k}(\unx)P_k(\unx)\, d\unx\\
&=\frac{(-1)^s}{(2\pi)^{m/2}}\int_{\rr^m} \exp\left(-\frac{\unz\cdot\unz}{2}+\unx\cdot\unz\right) \partial_{\unx}^s\big(P_k(\unx) e^{-|\unx|^2/2}\big)\, d\unx \\
&=\frac{1}{(2\pi)^{m/2}}\int_{\rr^m} \partial_{\unx}^s\left(\exp\left(-\frac{\unz\cdot\unz}{2}+\unx\cdot\unz\right)\right)P_k(\unx) e^{-|\unx|^2/2}\, d\unx \\
&= \unz^s\mathcal B\big[P_k(\unx)e^{-|\unx|^2/4}\big](\underline z).
\end{split}
\]
\end{proof}
\noindent The next problem is to compute $\mathcal B\big[P_k(\unx)e^{-|\unx|^2/4}\big]$. To this end we consider an example from which the general result will follow.

\begin{lemma}\label{ex1}
Let $P_k(\unx)=(x_1-e_1e_2 x_2)^k$. Then
\[
\mathcal B\big[P_k(\unx)e^{-|\unx|^2/4}\big](\unz)=(z_1 -e_1e_2z_2)^k=P_k(\underline{z}).
\]
\end{lemma}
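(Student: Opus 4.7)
The plan is to reduce the $m$-dimensional Bargmann integral to a two-dimensional one by integrating out the variables $x_3,\ldots,x_m$ (which do not appear in $P_k$), and then to exploit the fact that $e_1e_2$ squares to $-1$ so as to mimic the complex-variable calculation of Section 2. Specifically, after completing the square in the Gaussian exponent as $-\frac{\unz\cdot\unz}{2}+\unx\cdot\unz-\frac{|\unx|^2}{2}=-\frac{1}{2}\sum_{j=1}^m(x_j-z_j)^2$, the integrals in $x_3,\ldots,x_m$ each produce a factor $\sqrt{2\pi}$, reducing the problem to evaluating
\[
\frac{1}{2\pi}\int_{\rr^2}\exp\!\left(-\tfrac{(x_1-z_1)^2+(x_2-z_2)^2}{2}\right)(x_1-e_1e_2x_2)^k\,dx_1\,dx_2.
\]

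Setting $I:=e_1e_2$, the key observation is that $I$ commutes with every complex scalar and satisfies $I^2=-1$, so the subalgebra $\cc[I]$ is commutative. After shifting variables $u_j=x_j-z_j$, I would rewrite
\[
(x_1-Ix_2)^k=\big((u_1-Iu_2)+(z_1-Iz_2)\big)^k=\sum_{j=0}^{k}\binom{k}{j}(u_1-Iu_2)^j(z_1-Iz_2)^{k-j},
\]
the binomial expansion being legitimate precisely because all factors lie in $\cc[I]$ and therefore commute.

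Finally, I would evaluate $\int_{\rr^2}e^{-(u_1^2+u_2^2)/2}(u_1-Iu_2)^j\,du_1\,du_2$ in polar coordinates. Using the de Moivre-type identity $(u_1-Iu_2)^j=r^j(\cos j\theta-I\sin j\theta)$ (valid since $I^2=-1$), the angular integral vanishes for $j\geq 1$ and equals $2\pi$ for $j=0$. Hence only the $j=0$ summand contributes to the binomial sum, producing $(z_1-Iz_2)^k=P_k(\unz)$ as claimed. The main obstacle is care with the non-commutativity: the binomial expansion and the de Moivre identity have to be justified, and both rest on the commutativity of $\cc[e_1e_2]$; once this is isolated, the rest is essentially the complex scalar computation of Section 2 in disguise.
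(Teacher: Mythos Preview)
Your argument is correct. The paper takes a different route: rather than completing the square and expanding binomially, it writes $Z^k e^{-|\unx|^2/2}$ (with $Z=x_1-e_1e_2x_2$) as $\big(-(\partial_{x_1}-e_1e_2\partial_{x_2})\big)^k e^{-|\unx|^2/2}$ via a Rodrigues-type identity, and then integrates by parts so that this differential operator acts on the plane-wave factor $\exp\!\big(-\tfrac{\unz\cdot\unz}{2}+\unx\cdot\unz\big)$, producing the factor $(z_1-e_1e_2z_2)^k$; the remaining Gaussian integral equals $(2\pi)^{m/2}$. That approach mirrors the Hermite-polynomial computation in Section~2 and the proof of Theorem~\ref{Th3.2}, which is convenient for the narrative leading to Theorem~\ref{fundteoFra}. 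Your approach is more self-contained: it avoids integration by parts entirely and makes the role of the commutative subalgebra $\cc[e_1e_2]$ fully explicit, so it immediately generalises to any $P_k$ built from a single element $I$ with $I^2=-1$. Both arguments rely tacitly on the same contour shift $\int_{\rr}e^{-(x-z)^2/2}\,dx=\sqrt{2\pi}$ for complex $z$ (equivalently, on analytic continuation from real $\unz$), so neither has an edge on that technical point.
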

\begin{proof}
Let  $Z=x_1-e_1e_2 x_2$ and $2\partial_{\bar Z}=\partial_{x_1}-e_1e_2\partial_{x_2}$. Note that
\[
e^{-|\unx|^2/2}=\exp\left(-\frac 12 Z \bar Z\right)\exp\left( \sum_{j=3}^m - \frac 12 x_j^2\right).
\]
Thus we have
\[
\begin{split}
Z^k e^{-|x|^2/2}&=\left(\left(-2\partial_{\bar Z}\right)^k  \exp\left(-\frac 12 Z \bar Z\right)\right)\exp\left( \sum_{j=3}^m - \frac 12 x_j^2\right)\\
&=\big(-(\partial_{x_1}-e_1e_2 \partial_{x_2})\big)^k e^{-|\unx|^2/2}.
\end{split}
\]
Therefore
\[
\begin{split}
\mathcal B\big[P_k(\unx)e^{-|\unx|^2/4}\big](\unz)&=\frac{1}{(2\pi)^{m/2}}\int_{\rr^m}\left((\partial_{x_1} -e_1e_2\partial_{x_2})^k \exp\left(-\frac{\unz\cdot\unz}{2} +\unx\cdot \unz\right)\right)e^{-|\unx|^2/2}d\unx\\
&=\frac{(z_1 -e_1e_2z_2)^k}{(2\pi)^{m/2}}\int_{\rr^m}\exp\left(-\frac{(\unz-\unx)\cdot(\unz-\unx)}{2}\right)d\unx\\
&=\frac{(z_1 -e_1e_2z_2)^k}{(2\pi)^{m/2}}\int_{\rr^m}\exp\left(-\frac{ |\unx|^2}{2}\right)d\unx\\
&=(z_1 -e_1e_2z_2)^k.
\end{split}
\]
\end{proof}
\noindent We can now prove the following simple but important result:

\begin{theorem}\label{fundteoFra}
Suppose that $P_k(\unx)$ is a homogeneous polynomial of degree $k$ in $\mathscr M(\rr^m)$. Then the following formula holds:
\[
\mathcal B\big[H_{s,k}(\unx)e^{-|\unx|^2/4}P_k(\unx)\big](\unz)=\unz^s P_k(\unz) .
\]
\end{theorem}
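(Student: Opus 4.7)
My first step would be to invoke Theorem~\ref{Th3.2}, which already reduces the claim to the ``base case''
\[
\mathcal B\big[P_k(\unx) e^{-|\unx|^2/4}\big](\unz) = P_k(\unz)
\]
for every homogeneous monogenic polynomial $P_k$ of degree $k$. Lemma~\ref{ex1} handles the single polynomial $(x_1 - e_1 e_2 x_2)^k$, but I would prefer a direct argument that covers all of $\mathscr M(\rr^m)$ uniformly, rather than try to propagate Lemma~\ref{ex1} to an arbitrary $P_k$ by rotations and linearity (a spanning argument for $\mathscr M_k$ built from $\mathrm{SO}(m)$-translates of a single polynomial is possible but not very transparent).

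The direct route I have in mind is to complete the square in the exponent,
\[
-\tfrac12 \unz\cdot\unz + \unx\cdot\unz - \tfrac12 \unx\cdot\unx = -\tfrac12 (\unx-\unz)\cdot(\unx-\unz),
\]
and then shift the contour $\unx\mapsto\unx+\unz$ coordinate by coordinate. This shift is legitimate by Cauchy's theorem, since the integrand is entire in each $z_j$ and Gaussian-damped. The Bargmann integral then becomes the heat-kernel convolution at time $1/2$,
\[
\mathcal B\big[P_k(\unx) e^{-|\unx|^2/4}\big](\unz) = \frac{1}{(2\pi)^{m/2}} \int_{\rr^m} e^{-|\unx|^2/2}\, P_k(\unx+\unz)\, d\unx.
\]
Now I would Taylor-expand $P_k(\unx+\unz)$ as a polynomial in $\unx$ (a \emph{finite} sum because $\deg P_k=k$), compute the Gaussian moments (odd moments vanish, even moments give the standard double-factorial constants), and recognize the resulting series as $e^{\Delta_{\unz}/2}\,P_k(\unz)$, where $\Delta_{\unz}$ is the usual Laplacian acting componentwise on Clifford-valued polynomials.

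To finish, I would use $\partial_{\unz}^2 = -\Delta_{\unz}$: monogenicity of $P_k$ gives $\Delta_{\unz} P_k = -\partial_{\unz}(\partial_{\unz} P_k)=0$, so $P_k$ is harmonic and $e^{\Delta_{\unz}/2}$ acts as the identity on it. Combining this with Theorem~\ref{Th3.2} then yields the full formula $\mathcal B[H_{s,k}(\unx)e^{-|\unx|^2/4}P_k(\unx)](\unz)=\unz^s P_k(\unz)$. The one genuine subtlety in this plan is the contour shift for complex $\unz$, but this is a routine Gaussian-integral estimate; once it is established, the punchline is essentially the one-line observation that monogenic polynomials are harmonic.
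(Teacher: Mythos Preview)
Your argument is correct but follows a genuinely different route from the paper. The paper establishes the base case $\mathcal B\big[P_k(\unx)e^{-|\unx|^2/4}\big](\unz)=P_k(\unz)$ by first checking it on the single plane wave of Lemma~\ref{ex1}, then invoking Spin$(m)$-invariance of the transform to cover all monogenic plane waves $(\langle\unx,\underline t\rangle-\underline t\,\underline s\,\langle\unx,\underline s\rangle)^k$, and finally using that these span the homogeneous monogenics of degree $k$. You bypass this structure theory entirely: after the contour shift you identify the transform with the heat operator $e^{\Delta_{\unz}/2}$ acting on $P_k$, and the single identity $\partial_{\unz}^{\,2}=-\Delta_{\unz}$ finishes the job for every monogenic $P_k$ at once. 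Your method is more elementary and self-contained (no Spin-invariance, no spanning lemma), while the paper's argument makes the equivariance of the picture explicit and ties the result to the plane-wave basis that is standard in Clifford analysis.
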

\begin{proof}
We show that the formula that we have established for $P_k(\unx)=(x_1 -e_1e_2 x_2)^k$ in Lemma \ref{ex1} holds for a general $P_k(\unx)$. First of all, due to the Spin$(m)$-invariance, the result also holds for monogenic plane waves functions of the form
\[
(\langle \unx, \underline t\rangle -\underline t\, \underline s \langle \unx, \underline s\rangle)^k,
\]
where $\underline t$ and $\underline s$ are orthogonal unit vectors. Then we notice that the space of homogeneous monogenic polynomials of degree $k$ is spanned by finitely many monogenic plane waves $(\langle \unx, \underline t\rangle -\underline t\, \underline s \langle \unx, \underline s\rangle)^k$ for some choices of the parameters $(\underline t,\underline s)$. This, combined with Theorem \ref{Th3.2}, proves the result.
\end{proof}

\section{Monogenic Bargmann modules}

We will use the following notations.

\begin{definition}
Let $s\in\mathbb N$. We denote by $\mathscr M^s(\rr^m)$ the right $\mathbb C_m$-module of $s$-monogenic functions on $\rr^m$, namely the set of smooth functions in the kernel of $\partial_{\unx}^s$. Next, by $\mathscr M^s(\mathbb C^m)$ we denote the right $\mathbb C_m$-module of $s$-monogenic functions on $\mathbb C^m$, namely the set of entire holomorphic functions in the kernel of $\partial_{\unz}^s$, where $\partial_{\underline z}=\sum_{j=1}^me_j\partial_{z_j}$ is the complexified Dirac operator.
\end{definition}

Note that $\mathscr M^s(\mathbb C^m)$ is spanned by the set of polynomials of the form $\unz^jP_k(\unz)$, $j=0,\ldots ,s-1$, where $P_k(\unz)$ is complex spherical monogenics of degree $k\in\mathbb N$.

Let us now define the $s$-monogenic Bargmann modules.

\begin{definition}
For any $s\in\mathbb N$,
the s-monogenic Bargmann module $\mathscr{MB}^s(\mathbb C^m)$ is defined as
\[
\mathscr{MB}^s(\mathbb C^m)=\mathscr M^s(\mathbb C^m)\cap \mathscr B(\mathbb C^m),
\]
and it is is equipped with the inner product defined in $\mathscr B(\mathbb C^m)$.
\end{definition}
\noindent The Segal-Bargmann transform may act on $s$-monogenic functions as described in the following:

\begin{proposition}
The map
\[
\mathcal B: \ \mathscr M^s(\mathbb R^m)e^{-|\unx|^2/4} \cap L^2(\rr^m)\to \mathscr{MB}^s(\mathbb C^m)
\]
is an isometry (and it is even unitary).
\end{proposition}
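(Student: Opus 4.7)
The plan is to deduce the statement from the unitarity of the Segal--Bargmann transform $\mathcal B:L^2(\rr^m)\to \mathscr B(\cc^m)$ established in Section~2, combined with Theorem~\ref{fundteoFra}. First I would extend $\mathcal B$ componentwise to $\cc_m$-valued functions, making it a unitary right $\cc_m$-linear isomorphism from $L^2(\rr^m;\cc_m)$ onto $\mathscr B(\cc^m)\otimes\cc_m$; this is immediate since both Hilbert norms split as sums of squared moduli of scalar components.

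Next, I would exploit the Clifford--Hermite orthogonal basis $\{\psi_{s',k}(\unx)P_k(\unx)\}$ of $L^2(\rr^m)$ recalled just before Theorem~\ref{Th3.2}, with $P_k$ ranging over a basis of the homogeneous spherical monogenics of degree~$k$. By Theorem~\ref{fundteoFra}, $\mathcal B$ sends this basis to the family $\{\unz^{s'}P_k(\unz)\}$, which must then be orthogonal in $\mathscr B(\cc^m)$ by unitarity. Since $\mathscr M^s(\cc^m)$ is spanned by the subfamily with $s'<s$ (as noted in the paper), the intersection $\mathscr{MB}^s(\cc^m)=\mathscr M^s(\cc^m)\cap\mathscr B(\cc^m)$ coincides with the closed Bargmann-$L^2$ span of $\{\unz^{s'}P_k(\unz):s'<s\}$. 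Consequently $\mathcal B^{-1}(\mathscr{MB}^s(\cc^m))$ is the closed $L^2$ subspace
\[
U_s=\overline{\mathrm{span}}\bigl\{\psi_{s',k}(\unx)P_k(\unx):s'<s\bigr\}\subset L^2(\rr^m),
\]
and $\mathcal B$ restricts to a unitary isomorphism $U_s\to\mathscr{MB}^s(\cc^m)$.

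It then remains to identify $U_s$ with $\mathscr M^s(\rr^m)e^{-|\unx|^2/4}\cap L^2(\rr^m)$. For the inclusion $U_s\subset\mathscr M^s(\rr^m)e^{-|\unx|^2/4}$, it suffices to check that each polynomial $H_{s',k}(\unx)P_k(\unx)$ lies in $\mathscr M^{s'+1}(\rr^m)\subseteq\mathscr M^s(\rr^m)$ for $s'<s$; this can be verified by induction on $s'$ using the defining identity $H_{s',k}(\unx)P_k(\unx)\,e^{-|\unx|^2/2}=(-1)^{s'}\partial_{\unx}^{s'}\bigl(P_k(\unx) e^{-|\unx|^2/2}\bigr)$ together with the Clifford Leibniz rule for $\partial_{\unx}$ and $\partial_{\unx}(e^{-|\unx|^2/2})=-\unx\,e^{-|\unx|^2/2}$. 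For the reverse inclusion, given $f\in\mathscr M^s(\rr^m)$ with $fe^{-|\unx|^2/4}\in L^2$, I would invoke the Fischer--Almansi-type decomposition $\mathscr M^s(\rr^m)=\bigoplus_{j=0}^{s-1}\unx^j\mathscr M(\rr^m)$ (valid because $s$-monogenic functions are real-analytic solutions of the elliptic operator $\partial_{\unx}^s$) and observe that the change of basis between $\{\unx^jP_k\}_{j<s}$ and $\{H_{j,k}P_k\}_{j<s}$ is triangular in degree and invertible, so the Sommen expansion of $fe^{-|\unx|^2/4}$ is supported on indices $s'<s$.

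The main obstacle is precisely this last identification: transferring the differential condition $\partial_{\unx}^s f=0$ through the Clifford--Hermite expansion to a support condition on the index $s'$. This relies on a Fischer--Almansi-type decomposition for $s$-monogenic functions which is not stated explicitly in the excerpt and has to be set up carefully, together with control of $L^2$ convergence after multiplication by the Gaussian weight $e^{-|\unx|^2/4}$. Once this identification is secured, the isometry---and in fact unitarity---of $\mathcal B:\mathscr M^s(\rr^m)e^{-|\unx|^2/4}\cap L^2(\rr^m)\to\mathscr{MB}^s(\cc^m)$ follows directly from the ambient unitarity of the transform.
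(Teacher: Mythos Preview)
Your proposal is correct and follows essentially the same route as the paper: use the ambient unitarity of $\mathcal B$ together with Theorem~\ref{fundteoFra}, and identify the domain via the fact that the polynomials $H_{j,k}(\unx)P_k(\unx)$ with $j<s$ span $\mathscr M^s(\rr^m)$. The paper simply asserts this last spanning statement in one line, whereas you spell out the Fischer--Almansi identification and the triangular change of basis more carefully; apart from that extra detail, the arguments coincide.
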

\begin{proof}
The proof follows the standard arguments in the complex case and since $H_{s,k}(\unx)P_k(\unx)$, $j=0,\ldots ,s-1$, span $\mathscr M^s(\mathbb R^m)$, it is a consequence of the previous Theorem \ref{fundteoFra}.
\end{proof}
\begin{remark}{\rm
From the Fischer decomposition we obtain
\begin{equation*}
\frac{\langle\underline x,\underline u\rangle^k}{k!}=\sum_{s=0}^k\underline x^sZ_{k,s}(\underline x,\underline u)\,\underline u^s,
\end{equation*}
where $Z_{k,s}(\underline x,\underline u)$ are the so-called zonal spherical monogenics (see \cite{SoJa}). The latter are homogeneous polynomials of degree $k-s$ in $\underline x$ and $\underline u$ and satisfy the two-sided biregular system $\partial_{\underline x}Z_{k,s}(\underline x,\underline u)=Z_{k,s}(\underline x,\underline u)\partial_{\underline u}=0$.
\\
One can also check that
\[Z_{k,s}(\underline x,\underline u)=\frac{Z_{k-s,0}(\ux,\underline u)}{\beta_{s,k-s}\ldots \beta_{1,k-s}},\quad k\ge s\]
with $\beta_{2s,k}=-2s$, $\beta_{2s+1,k}=-(2s+2k+m)$. Moreover,
\begin{multline*}
Z_{k}(\underline x,\underline u)=Z_{k,0}(\underline x,\underline u)\\
=\frac{\Gamma\left(\frac{m}{2}-1\right)}{2^{k+1}\Gamma\left(k+\frac{m}{2}\right)}(\vert\underline x\vert\vert\underline u\vert)^k\left[(k+m-2)C_k^{\frac{m}{2}-1}(t)+(m-2)\frac{\underline x\wedge\underline u}{\vert\underline x\vert\vert\underline u\vert}C_{k-1}^{\frac{m}{2}}(t)\right],
\end{multline*}
where $C_k^\alpha(t)$ denotes the classical Gegenbauer polynomial and $t=\displaystyle{\frac{\langle\underline x,\underline u\rangle}{\vert\underline x\vert\vert\underline u\vert}}$.
}
\end{remark}
\begin{remark}{\rm
Therefore, the Fischer decomposition of $e^{\langle\underline x,\underline u\rangle}$ has the form
\begin{equation*}
e^{\langle\underline x,\underline u\rangle}=\sum_{k=0}^\infty\frac{\langle\underline x,\underline u\rangle^k}{k!}=\sum_{k=0}^\infty\sum_{s=0}^k\underline x^sZ_{k,s}(\underline x,\underline u)\,\underline u^s=\sum_{s=0}^\infty\underline x^sE_s(\underline x,\underline u)\,\underline u^s,
\end{equation*}
with $E_s(\underline x,\underline u)=\sum_{k=s}^\infty Z_{k,s}(\underline x,\underline u)$.
}
\end{remark}

We note that $E(\underline x,\underline u)=E_0(\underline x,\underline u)$ is the monogenic part of $e^{\langle\underline x,\underline u\rangle}$, which is known as the Fourier-Borel kernel. This function has been computed in closed form using Bessel functions in \cite{ndss} (see also \cite{ss}).

Let us again denote by $\mathcal M$ the projection of a function onto its monogenic part in the complex monogenic Fischer decomposition.

\begin{proposition}
Let $f(\unx)=g(\unx)e^{-|\unx|^2/4}\in L^2(\rr^m)$. Then we have
\[
\mathcal M\big(\mathcal B[f](\unz)\big)=\frac{1}{(2\pi)^{m/2}}\int_{\rr^m}E(\unz,\unx) g(\unx) e^{-|\unx|^2/2}\, d\unx=[E(\unz,\unx)^\dagger, g(\unx)],
\]
where $[\cdot, \cdot]$ denotes the Fischer inner product. In particular when $g$ is monogenic, we obtain $\mathcal M\big(\mathcal B[f](\unz)\big)=g(\unz)$.
\end{proposition}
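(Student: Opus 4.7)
The plan is to substitute $f=g\,e^{-|\unx|^{2}/4}$ into Definition \ref{SBtr}, combine the Gaussian factors, push the monogenic projection $\mathcal M$ in the variable $\unz$ inside the integral, and identify the monogenic part of the resulting kernel with the Fourier-Borel kernel $E(\unz,\unx)$ via the Fischer decomposition of the exponential.

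After the substitution the transform takes the form
\[
\mathcal{B}[f](\unz)=\frac{1}{(2\pi)^{m/2}}\int_{\rr^m}\exp\!\left(-\tfrac{\unz\cdot\unz}{2}+\unx\cdot\unz\right)g(\unx)\,e^{-|\unx|^{2}/2}\,d\unx,
\]
so the whole problem reduces to identifying $\mathcal M_{\unz}\bigl[\exp(-\tfrac{\unz\cdot\unz}{2})\exp(\langle\unz,\unx\rangle)\bigr]$ with $E(\unz,\unx)$. The key observation is that $\unz^{2}=-\unz\cdot\unz$ is a scalar, hence $\exp(-\tfrac{\unz\cdot\unz}{2})=\sum_{n\ge 0}\frac{\unz^{2n}}{2^{n}n!}$ is central in $\mathbb{C}_{m}$. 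Combining this with the Fischer decomposition $\exp(\langle\unz,\unx\rangle)=\sum_{s\ge 0}\unz^{s}E_{s}(\unz,\unx)\unx^{s}$ recalled in the remark preceding the proposition, one gets
\[
\exp\!\left(-\tfrac{\unz\cdot\unz}{2}\right)\exp(\langle\unz,\unx\rangle)=\sum_{n,s\ge 0}\frac{1}{2^{n}n!}\,\unz^{2n+s}E_{s}(\unz,\unx)\,\unx^{s}.
\]
Each term with $2n+s\ge 1$ carries an explicit left factor of $\unz$, hence lies in the non-monogenic summand of the Fischer decomposition and is annihilated by $\mathcal M$ by uniqueness of that splitting; only the $n=s=0$ term survives, giving precisely $E_{0}(\unz,\unx)=E(\unz,\unx)$. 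Interchanging $\mathcal M$ with the integral (justified degree by degree through the absolute convergence provided by the Gaussian weight) then yields the first claimed identity.

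The second equality is the familiar identification of the Fischer pairing with the Gaussian-weighted $L^{2}$ pairing, applied with $P(\unx)=E(\unz,\unx)^{\dagger}$ (treated as a polynomial-type function of $\unx$ with $\unz$ as a parameter) and $Q(\unx)=g(\unx)$. For the last assertion, if $g$ is monogenic then $\mathcal M(g(\unz))=g(\unz)$, and applying Theorem \ref{fundteoFra} with $s=0$ to each homogeneous monogenic component of $g$ gives $\mathcal{B}[g\,e^{-|\unx|^{2}/4}](\unz)=g(\unz)$, whose monogenic projection is itself. The main subtlety will be the bookkeeping that leads from the noncommutative product $\exp(-\tfrac{\unz\cdot\unz}{2})\exp(\langle\unz,\unx\rangle)$ to the clean series displayed above; it is resolved decisively by the centrality of $\unz\cdot\unz$ together with the uniqueness of the Fischer splitting.
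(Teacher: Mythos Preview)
Your proposal is correct and follows essentially the same route as the paper: substitute $f=g\,e^{-|\unx|^2/4}$, pull $\mathcal M$ through the integral, and identify the monogenic part of $\exp(-\tfrac{\unz\cdot\unz}{2}+\unx\cdot\unz)$ with $E(\unz,\unx)$ via the Fischer decomposition of $e^{\langle\unz,\unx\rangle}$ together with the fact that $\exp(-\tfrac{\unz\cdot\unz}{2})=1+\unz^{2}(\cdots)$ contributes nothing to the $j=0$ piece. Your explicit double-series bookkeeping is in fact exactly the computation the paper carries out a few lines later for the more general projector $\mathcal M_s$ (the formula for $B_s(\unz,\unx)$), specialized here to $s=1$.
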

\begin{proof}
From the above remarks we obtain
\[
\mathcal M \left( \exp\left(-\frac{\unz\cdot\unz}{2}+\unx\cdot\unz-\frac{\unx\cdot\unx}{4}\right)\right)=\mathcal M\big(\exp(\unx\cdot\unz)\big)e^{-|\unx|^2/4}=E(\unz,\unx)e^{-|\unx|^2/4}.
\]
The statement follows using
\[
\mathcal M\big(\mathcal B[f](\unz)\big)=\frac{1}{(2\pi)^{m/2}} \int_{\rr^m} \mathcal M\left(\exp\left(-\frac{\unz\cdot\unz}{2}+\unx\cdot\unz-\frac{\unx\cdot\unx}{4}\right)\right)f(\unx)\, d\unx.
\]
\end{proof}

\section{Real $s$-monogenic Bargmann modules}

Let us recall that for harmonic polynomials $R(\unx),S(\unx)$ we have (see also \cite{gm})
\[
[R(\unx), S(\unx)]=R(\partial_{\unx})^\dagger S(\unx)\vert_{\unx=\underline 0}=\frac{1}{(2\pi)^{m/2}}\int_{\rr^m}
R(\unx)^\dagger S(\unx)\,e^{-|\unx|^2/2}\,d\unx.
\]
For $P_k(\unx)\in\mathscr M(\rr^m)$ we have
\[
P_k(\underline u)=[Z_k(\underline u,\unx)^\dagger, P_k(\underline x)]=[E(\underline u,\unx)^\dagger, P_k(\unx)].
\]
From this formula one also obtains in another way that for every monogenic $g(\unx)$ such that $g(\unx)e^{-|\unx|^2/4}\in L^2(\rr^m)$ the following formula holds
\[
g(\underline u)=\frac{1}{(2\pi)^{m/2}}\int_{\rr^m} E(\underline u,\unx)g(\unx) e^{-|\unx|^2/2}\, d\unx.
\]
For general polynomials $R(\unx),S(\unx)$ this link with the Fischer inner product no longer holds. However, we have that
\[
[R(\unx), S(\unx)]=\frac{1}{\pi^m}\int_{\mathbb C^m}R(\unz)^\dagger S(\unz)e^{-|\unz|^2}\, d\unz .
\]
For instance, take $R,S$ of the form $z_1^{k_1}\ldots z_m^{k_m}$. Moreover, $\mathcal B$ is an isometry between $L^2(\rr^m)$ and the Segal-Bargmann-Fock module equipped with the above inner product. In particular, if $g\in\mathscr M^s(\rr^m)$ we have that the map
\[
\mathcal B:\ g(\unx)e^{-|\unx|^2/4} \mapsto h(\unz)=\mathcal B ( g(\unx)e^{-|\unx|^2/4})
\]
is an isometry. Note also that $h$ is $s$-monogenic but it is no longer true that $h(\underline u)=g(\underline u)$ for $\underline u\in\rr^m$.

Let us consider $f\in \mathscr B(\mathbb C^m)$ and let $\mathcal M_s$ be the orthogonal projection onto the submodule of $s$-monogenic functions $\mathscr{MB}^s(\mathbb C^m)$. To be more precise, if
\[
f(\unz)=\sum_{j=0}^\infty \unz^j f_j(\unz),\quad\partial_{\unz}f_j(\unz)=0,
\]
is the Fischer decomposition of $f$, then $\mathcal M_s\big(f(\unz)\big)=\sum_{j=0}^{s-1} \unz^jf_j(\unz)$.

Define the function
\[
B_s(\unz,\unx ) = \mathcal M_s\left(\exp\left( -\frac{\unz\cdot \unz}{2} +\unx\cdot\unz\right) \right).
\]

\begin{theorem}
The following formula holds:
\[
B_s(\unz,\unx )=\sum_{j=0}^{s-1}\unz^j \sum_{\ell=0}^{[j/2]} \frac{1}{2^\ell\ell!}  E_{j-2\ell}(\unz ,\unx) \unx^{j-2\ell}.
\]
\end{theorem}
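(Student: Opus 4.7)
The plan is to make the Fischer decomposition of $\exp\!\left(-\unz\cdot\unz/2 + \unx\cdot\unz\right)$ in the variable $\unz$ completely explicit, after which $\mathcal{M}_s$ acts as simple truncation. The first step is to split the exponential into a purely $\unz$-dependent factor and a mixed factor. Since $\unz$ is a 1-vector, the Clifford identity $\unz^2 = -\unz\cdot\unz$ turns the Gaussian factor into the scalar power series
\[
\exp\left(-\frac{\unz\cdot\unz}{2}\right) = \exp\!\left(\frac{\unz^2}{2}\right) = \sum_{\ell=0}^\infty \frac{\unz^{2\ell}}{2^\ell\,\ell!},
\]
while the mixed factor admits the Fischer expansion from the preceding Remark, read in the variables $(\unz,\unx)$:
\[
\exp(\unx\cdot\unz) = \sum_{s=0}^\infty \unz^s\, E_s(\unz,\unx)\,\unx^s.
\]

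The second step is to multiply these two series and regroup by the substitution $j = 2\ell + s$. Since $\unz^{2\ell}$ is a scalar and hence central, the product rearranges to
\[
\exp\!\left(-\frac{\unz\cdot\unz}{2} + \unx\cdot\unz\right) = \sum_{j=0}^\infty \unz^j \sum_{\ell=0}^{[j/2]} \frac{1}{2^\ell\,\ell!}\, E_{j-2\ell}(\unz,\unx)\,\unx^{j-2\ell}.
\]
The main point to verify is that the inner coefficient in brackets is left-monogenic in $\unz$, so that this is genuinely the Fischer decomposition (which is unique). This follows because each $E_{j-2\ell}(\unz,\unx)$ is a series of zonal spherical monogenics $Z_{k,\,j-2\ell}(\unz,\unx)$, which are biregular and in particular left-monogenic in $\unz$; the right factor $\unx^{j-2\ell}$ is inert for $\partial_{\unz}$.

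The proof then concludes immediately: by definition $\mathcal{M}_s$ keeps only the summands with $j \leq s-1$ in the Fischer decomposition $\sum_j \unz^j f_j(\unz)$, so applying it to the displayed identity yields the claimed formula for $B_s(\unz,\unx)$. The only technical subtlety is justifying the interchange of summations used in the re-indexing; since the Gaussian kernel is entire and the inner sum over $\ell$ is finite for each fixed $j$, this is harmless. I expect the verification that the inner sums are the correct monogenic Fischer components (rather than a coincidentally matching rearrangement) to be the most delicate part of the argument, but it is handled cleanly by the biregularity of the $Z_{k,s}$ already recorded in the preceding Remark.
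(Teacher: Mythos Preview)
Your proposal is correct and follows essentially the same route as the paper: expand the Gaussian factor as $\sum_\ell \unz^{2\ell}/(2^\ell\ell!)$, use the Fischer expansion $e^{\unx\cdot\unz}=\sum_k \unz^k E_k(\unz,\unx)\unx^k$, multiply, re-index by $j=k+2\ell$, and then truncate via $\mathcal M_s$. Your added remarks (centrality of $\unz^{2\ell}$, left-monogenicity of the inner coefficients via biregularity of the $Z_{k,s}$) make explicit what the paper leaves implicit; the only cosmetic issue is that you reuse $s$ as a dummy index in the Fischer expansion of $e^{\unx\cdot\unz}$, which clashes with the fixed $s$ in $B_s$ and $\mathcal M_s$.
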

\begin{proof}
We have that
\[
\exp\left( -\frac{\unz\cdot \unz}{2}\right)=\sum_{\ell=0}^\infty \frac{\unz^{2\ell}}{2^\ell\ell!}
\]
and
\[
\exp\left(\unx\cdot\unz\right)=\sum_{k=0}^\infty \unz^k E_k(\unz ,\unx) \unx^k.
\]
Therefore
\[
\begin{split}
\exp\left( -\frac{\unz\cdot \unz}{2} +\unx\cdot\unz\right)&=\sum_{k,\ell=0}^\infty \frac{1}{2^\ell\ell!} \unz^{k+2\ell}E_k(\unz ,\unx) \unx^k \\
&= \sum_{j=0}^\infty \unz^j \sum_{\ell=0}^{[j/2]} \frac{1}{2^\ell\ell!}  E_{j-2\ell}(\unz ,\unx) \unx^{j-2\ell}
\end{split}
\]
from which the desired result easily follows.
\end{proof}

\begin{remark}
{\rm Note also the formula
\[
\mathcal M_s\left(\mathcal B\big[g(\unx) e^{-|\unx|^2/4}\big](\underline z)\right)=\frac{1}{(2\pi)^{m/2}}\int_{\rr^m} \mathcal M_s \left(\exp\left( -\frac{\unz\cdot \unz}{2} +\unx\cdot\unz\right)\right)g(\unx) e^{-|\unx|^2/2}\, d\unx.
\]
}
\end{remark}
\noindent Next, one may introduce the real $s$-monogenic Bargmann module, which is the set of $g\in\mathscr M^s(\rr^m)$ such that $g(\unx)e^{-|\unx|^2/4}$ is square-integrable.
\begin{definition}
For any $s\in\mathbb N$, the real s-monogenic Bargmann module $\mathscr{MB}^s(\rr^m)$ is defined by
\[
\mathscr{MB}^s(\rr^m)=\mathscr M^s(\rr^m)\cap L^2\big(\rr^m, e^{-|\unx|^2/2}\big).
\]
It is equipped with the inner product in  $L^2\big(\rr^m, e^{-|\unx|^2/2}\big)$
\[
\langle f,g\rangle = \frac{1}{(2\pi)^{m/2}}\int_{\rr^m}f(\unx)^\dagger g(\unx)e^{-|\unx|^2/2} \, d\unx
\]
\end{definition}
As an additional remark, we note that for any $f\in \mathscr{MB}^s(\rr^m)$ we can define its Weierstrass transform $\mathcal W:\ \mathscr{MB}^s(\rr^m)\to \mathscr{MB}^s(\mathbb C^m)$ by
\[
\mathcal W [f](\unz)=\frac{1}{(2\pi)^{m/2}} \int_{\rr^m}\exp\left(  -\frac{(\unz-\unx)\cdot (\unz-\unx)}{2}\right) f(\unx)\, d\unx=\mathcal B\big[f(\unx) e^{-|\unx|^2/4}\big](\underline z).
\]
With standard techniques, one can show that the map $\mathcal W:\ \mathscr{MB}^s(\rr^m)\to \mathscr{MB}^s(\mathbb C^m)$ is an isometry, moreover
\[
\mathcal W[f](\unz) = \frac{1}{(2\pi)^{m/2}} \int_{\rr^m} B_s(\unz,\unx) f(\unx) e^{-|\unx|^2/2}\, d\unx.
\]

\subsection*{Acknowledgments}

D. Pe\~na Pe\~na acknowledges the support of a Postdoctoral Fellowship given by Istituto Nazionale di Alta Matematica (INdAM) and cofunded by Marie Curie actions.


\begin{thebibliography}{99}

\bibitem{Ba} V. Bargmann, {\em On a Hilbert space of analytic functions and an associated integral transform}, Comm. Pure Appl. Math. 14 (1961) 187--214.

\bibitem{bds} F. Brackx, R. Delanghe, F. Sommen, {\em Clifford Analysis}, Pitman Res. Notes in Math., 76, 1982.

\bibitem{csss} F. Colombo, I. Sabadini, F. Sommen, D. C. Struppa, {\em Analysis of Dirac Systems and Computational Algebra}, Progress in Mathematical Physics, Vol. 39, {Birkh\"auser}, Boston, 2004.

\bibitem{DSS} R. Delanghe, F. Sommen, V. Sou\v cek, {\em Clifford algebra and spinor-valued functions}, Mathematics and its Applications, 53, Kluwer Academic Publishers Group, Dordrecht, 1992.

\bibitem{ndss} N. De Schepper, F. Sommen, {\em Closed form of the Fourier-Borel kernel in the framework of Clifford analysis}, Results Math. 62 (2012), no. 1-2, 181--202.

\bibitem{folland} G. B. Folland, {\em Harmonic Analysis in Phase Space}, Annals of Mathematics Studies 122, Princeton University Press, Princeton 1989.

\bibitem{gm} J. Gilbert, M. Murray, {\em Clifford Algebras and Dirac Operators in Harmonic Analysis}, Cambridge Studies in Advanced Mathematics 26, Cambridge University Press, 1991.

\bibitem{kmnq}  W. D. Kirwin, J. Mour\~ao, J. P. Nunes, T. Qian, {\em Extending coherent state transforms to Clifford analysis}, J.  Math. Phys., {\bf 57} (2016), 103505, 10 pp.

\bibitem{mnq} J. Mour\~ao, J. P. Nunes, T. Qian, {\em Coherent State Transforms and the Weyl Equation in Clifford Analysis}, J.  Math. Phys., {\bf 58} (2017), 013503, 12 pp.

\bibitem{neretin} Y.  A. Neretin, {\em Lectures on Gaussian Integral Operators and Classical Groups}, EMS Series of Lectures in Mathematics, European Mathematical Society, 2011.

\bibitem{ss} I. Sabadini, F. Sommen, {\em A Fourier--Borel transform for monogenic functionals on the Lie ball}, J. Math. Soc. Japan, {\bf 68} (2016), 1487--1504.

\bibitem{Se1} I. E. Segal, {\em Mathematical characterization of the physical vacuum for a linear Bose-Einstein field}, Illinois J. Math. 6 (1962), 500--523.

\bibitem{Se2} I. E. Segal, {\em The complex wave representation of the free Boson field}, Topics in functional analysis (essays dedicated to M. G. Kre\u\i n on the occasion of his 70th birthday), pp. 321--343, Adv. in Math. Suppl. Stud., 3, Academic Press, New York-London, 1978.

\bibitem{Som} F. Sommen, {\em Special functions in Clifford analysis and axial symmetry}, J. Math. Anal. Appl. 130 (1988), no. 1, 110--133.

\bibitem{SoJa} F. Sommen, B. Jancewicz, {\em Explicit solutions of the inhomogeneous Dirac equation}, J. Anal. Math. 71 (1997), 59--74.
\bibitem{dan}
D.~V. Voiculescu, K.~J. Dykema, and A.~Nica,
 {\em Free random variables}, volume~1 of {\em CRM Monograph Series}, American Mathematical Society, Providence, RI, 1992.

\bibitem{zhu} K. Zhu, {\em Analysis on Fock spaces}, Graduate Texts in Mathematics 263, Springer New York 2012.

\end{thebibliography}
\end{document}